\numberwithin{equation}{section}
\newtheorem{theorem}{Theorem}[section]
\newtheorem{proposition}[theorem]{Proposition}
\newtheorem{lemma}[theorem]{Lemma} 
\begin{document}

\vspace{5mm}

\begin{center}

{\Large \bfseries
	Existence and Uniqueness of BPS Vacuum and Multi-vortices in Inhomogeneous Abelian Higgs Model
}\\[17mm]

	SeungJun Jeon\textsuperscript{1},
	~~Chanju Kim\textsuperscript{2},
	~~Yoonbai Kim\textsuperscript{1}
\\[2mm]
{\itshape
	\textsuperscript{1}
	Department of Physics,
    Sungkyunkwan University,
    Suwon 16419,
    Korea
}
\\[-1mm]
{\itshape
	\textsuperscript{2}
	Department of Physics,
	Ewha Womans University,
	Seoul 03760,
	Korea
}
\\[-1mm]
{\itshape
	sjjeon@skku.edu,
	~cjkim@ewha.ac.kr,
	~yoonbai@skku.edu
}

\end{center}

\vspace{15mm}

\begin{abstract}

\noindent
The BPS limit of the inhomogeneous abelian Higgs model is considered in \((1+2)\)-dimensions.
The second order Bogomolny equation is examined in the presence of an inhomogeneity expressed as a function of spatial coordinates.
Assuming a physically reasonable upper bound on the \(L^{2}(\mathbb{R}^{2})\) norm of the inhomogeneity function, we prove the existence and the uniqueness of nontrivial BPS vacuum solution of zero energy and topological BPS multi-vortex solutions of quantized positive energies.

\end{abstract}

\clearpage

\section{Introduction}

In numerous studies on solitons in nonlinear field theories, BPS objects have been a powerful tool because of their tractability.
Among various field theoretic models including the BPS limit, abelian Higgs model has played an outstanding role.
The nonrelativistic version, the Ginzburg-Landau model, describes normal superconductivity as a macroscopic model \cite{Cohen:2016, Arovas:2019} and the Abrikosov vortices explain phenomena in type {I\(\!\)I} superconductors \cite{Arovas:2019}.
The relativistic version, the abelian Higgs model, suggests production of the Nielsen-Olesen vortices \cite{Manton:2004tk} which can be a source of anisotropy of the early universe as cosmic strings \cite{Vilenkin:1994}.

Even in the presence of spatial inhomogeneity, such BPS structure is preserved.
In \((1+1)\)-dimensional inhomogeneous scalar field theories, various BPS solitons involving domain wall have been identified together with exact solutions and some of them have negative energy \cite{Adam:2019djg, Manton:2019xiq, Kwon:2021flc}.
In the \((1+2)\)-dimensional inhomogeneous abelian Higgs model, BPS structure has been established \cite{Tong:2013iqa}, and the BPS limit has been demonstrated to be extendable to the inhomogeneous abelian Chern-Simons Higgs model \cite{Han:2015tga}.

Recent careful studies of the BPS structures in the inhomogeneous abelian Chern-Simons Higgs model \cite{ImCSH} and the inhomogeneous abelian Higgs model \cite{ImAH} exhibit the nontrivial inhomogeneous BPS vacuum solution in addition to soliton solutions.
Although the newly found vacuum solution has nontrivial functional behaviors to accommodate the externally imposed inhomogeneity, global aspects of the configuration remain the same as those of the trivial constant vacuum of homogeneous field theories.
For instance, the vacuum solution has minimum zero energy, zero spin, neutral electric charge and zero magnetic flux.
Despite the importance of vacuum structures in field theories, this nontrivial vacuum has not been explicitly discussed with mathematical rigor in the literature.
On the other hand, it is also found that the BPS bound is tied to the sign of the inhomogeneous term added to the Lagrangian, so that multi-vortex solutions consist exclusively of either vortices or antivortices, ensuring that the energy is positive semidefinite.

Under the general finite source-energy condition
\(
	\sigma \in
	L^{2}(\mathbb{R}^{2})
\)
of the inhomogeneity, the proof of the existence and uniqueness of the BPS multi-vortex solutions encounters technical difficulties in the inhomogeneous abelian Higgs model as pointed out in Ref. \cite{Han:2015tga}.
This difficulty is physically reasonable considering the possibility of the absence of an inhomogeneous BPS vacuum or vortex solutions under arbitrary shape of impurity.
Thus, a mathematically rigorous proof can only be pursued under physically plausible assumptions about the inhomogeneity.
Once such a proof is achieved, the obtained results, including the assumed conditions will provide valuable theoretical and experimental insights for further physics studies.

In this work, we introduce a condition on the inhomogeneous function in terms of its upper bound of the \(L^{2}(\mathbb{R}^{2})\) norm.
This type of localized inhomogeneity is physically reasonable due to its consistency with the finiteness of the energy contribution of the impurity.
Under this assumed condition, we show the existence and uniqueness of both inhomogeneous vacuum and topological multi-vortex solutions of the inhomogeneous abelian Higgs model in the BPS limit.

The remaining part of the paper consists of four sections:
In section 2, we begin by briefly reviewing the inhomogeneous abelian Higgs model up to derivation of the Bogomolny equation rewritten in terms of dimensionless parameters.
Under a legitimate condition for inhomogeneity, we prove the existence and uniqueness of the newly found BPS vacuum solution in section 3.
In section 4, we extend the well-known existence and uniqueness for topological BPS multi-vortex solution \cite{Taubes:1979tm, Jaffe:1980mj} to the realm of inhomogeneity with the help of the same condition.
We conclude this paper in section 5 with some remarks on further work to be done, which are suggested by numerical works.

\section{Bogomolny Equation of Inhomogeneous Abelian Higgs Model}

This section introduces the model of our interest and summarizes briefly derivation of the Bogomolny equation with boundary condition.
In \((1+2)\)-dimensional spacetime with metric signature \((-,+,+)\), the inhomogeneous abelian Higgs model in the Bogomolny limit is described by the Lagrangian density \cite{Tong:2013iqa}
\begin{equation}
	\mathscr{L} =
	- \frac{1}{4} F_{\mu\nu} F^{\mu\nu}
	- \overline{ D_{\mu} \phi } D^{\mu} \phi
	- \frac{g^2}{2}
		( |\phi|^{2}
		- v^{2}(\boldsymbol{x}) )^{2}
	+ sg\sigma(\boldsymbol{x}) B
,\label{208}
\end{equation}
where \(\phi\) is a complex scalar field and \(A_\mu\) is a \(\mathrm{U}(1)\) gauge field whose gauge-covariant derivatives and field-strength are defined as
\begin{equation}
	D_{\mu} \phi =
	( \partial_{\mu} - i g A_{\mu} ) \phi
, \qquad
	\overline{D_{\mu} \phi} = 
	( \partial_{\mu} + i g A_{\mu} ) \bar{\phi}
, \qquad
	F_{\mu\nu} =
	\partial_{\mu} A_{\nu} - \partial_{\nu} A_{\mu}
.
\end{equation}
Note that square
\(
	v^{2}(\boldsymbol{x})
	= v_{0}^{2} + \sigma(\boldsymbol{x})
\)
of the vacuum expectation value consists of a constant piece \(v_{0}^{2}\) and a spatially inhomogeneous function \( \sigma (\boldsymbol{x}) \) which stands for a kind of impurities, e.g. impurities in condensed matter samples, and is assumed to vanish at spatial infinity
\begin{equation}
	\lim_{ |\boldsymbol{x}|\to\infty }
	\sigma (\boldsymbol{x}) = 0
.\label{206}
\end{equation}
Accordingly the last term including magnetic field \(B=F_{12}\) is also inhomogeneous, whose parameter \(s\) has either \(+1\) or \(-1\).

The energy is given by
\begin{equation}
	E
	=
	\int d^{2} x \, \bigg\{
		\frac{1}{2} |\boldsymbol{E}|^{2}
		+ \frac{1}{2} B^{2}
		+ |D_{0} \phi|^{2}
		+ |D_{i} \phi|^{2}
		+ \frac{g^2}{2} [
			|\phi|^{2}
			- v^{2} (\boldsymbol{x})
		]^{2}
		- sg \sigma(\boldsymbol{x}) B
	\bigg\}
,
\end{equation}
where \( \boldsymbol{E} \) is the electric field \( (\boldsymbol{E})^{i} = E_{i} = F_{i0} \). 
For static configurations in the Weyl gauge \(A^{0} = 0\), the energy can be reshuffled to
\begin{equation}
	E =
	\int d^{2} x \,
	\bigg\{
		| ( D_{1} + isD_{2} ) \phi|^{2}
		+ \frac{1}{2}
			\big|
				B + sg
				[ |\phi|^{2}
				- v^{2} (\boldsymbol{x}) ]
			\big|^{2}
		+ sg v_{0}^{2} B
	\bigg\}
,\label{204}
\end{equation}
where we have omitted a few boundary terms which do not contribute to the energy.
Then we get the BPS bound
\begin{equation}
	E \ge
	sg v_{0}^{2} \int d^{2} x \, B
	=
	sg v_{0}^{2} \Phi_\mathrm{B}
,\label{205}
\end{equation}
where \(\Phi_B\) is the magnetic flux.
The bound is saturated if the the first two terms in \eqref{204} vanish, i.e.,
\begin{equation}
	( D_{1} + isD_{2} ) \phi = 0
, \qquad
	B +
	sg [ |\phi|^{2}
		 - v^{2}(\boldsymbol{x}) ]
	= 0
,\label{207}
\end{equation}
which are called the Bogomolny equations.
Notice that, in obtaining the BPS bound, the form of the scalar potential in \eqref{208} is the same as in the homogeneous abelian Higgs model \cite{Bogomolny:1975de} except for the introduction of the inhomogeneity
\(
	v_{0}^{2}
	\to
	v^{2} (\boldsymbol{x}).
\)

In the homogeneous case with \( \sigma=0 \), the Lagrangian \eqref{208} is independent of \(s\) and hence the BPS bound can be obtained for both \(s=1\) and \(s=-1\) in a single theory.
Then we can get energy bound \(E \ge g v_{0}^{2} |\Phi_\mathrm{B}| \).
With \(\sigma \neq 0\), however, we have only one bound \eqref{205} for given \(s\), because it appears in the inhomogeneous term \( s\sigma(\boldsymbol{x})B \).
We set \(s=1\) from now on. 
Then the first Bogomolny equation in \eqref{207} implies that the zeros of the scalar field \(\phi\), if it has any, are isolated and only positive vortex number is possible, i.e., \( \Phi_\text{B} \ge 0 \) due to Poincar{\'e} lemma \cite{Jaffe:1980mj}.
Thus, the energy bound in \eqref{205} is positive semi-definite for solutions satisfying \eqref{207}~\cite{ImAH}.

The first Bogomolny equation in \eqref{207} is solved for spatial components of the \(\mathrm{U}(1)\) gauge field \(A_i\) and substitution of it into the second Bogomolny equation leads to a second order partial differential equation for scalar amplitude \( |\phi|(\boldsymbol{x}) \)
\begin{equation}
	\nabla^{2} \ln
	\frac{
		|\phi|^{2}
	}{
		\displaystyle \prod_{a=1}^{n}
		| \boldsymbol{x} - \boldsymbol{x}_{a} |^{2}
	}
	= 2g^{2} [ |\phi|^{2} - v^{2}(\boldsymbol{x}) ]
,\label{201}
\end{equation}
where
\(
	\{ \boldsymbol{x}_{a} \in \mathbb{R}^{2}
	\mid
	\phi( \boldsymbol{x}_{a} ) = 0
	\text{ and } 
	a = 1, 2, \cdots, n
	\}
\)
denote the zeros of \(\phi\) which are interpreted as \(n\) locations of vortices.
The right hand side of the equation with the help of the boundary behavior \eqref{206} dictates the boundary condition for finite energy
\begin{equation}
	\lim_{ |\boldsymbol{x} | \to \infty }
	|\phi|^{2}
	= v_{0}^{2}
.\label{202}
\end{equation}

It is convenient to rescale variables by introducing dimensionless quantities \( \tilde{\boldsymbol{x}} \), \(f\) and \(\varsigma\),
\begin{equation}
	\boldsymbol{x}
	= 
	\frac
		{ \tilde{\boldsymbol{x}} }
		{ \sqrt{2} g v_{0} }
, \quad
	|\phi|^{2} = v_{0}^{2} f
, \quad
	\sigma
	=
	v_{0}^{2} \varsigma
,
\end{equation}
that simplify the equation \eqref{201} and boundary condition \eqref{202} as
\begin{align}
	& \tilde{\nabla}^{2} \ln
	\frac{ f }
	{
		\displaystyle \prod_{a=1}^{n}
		| \tilde{ \boldsymbol{x} }
		- \tilde{ \boldsymbol{x} }_{a} |^{2}
	}
	=
	f  - \varsigma - 1 
,\label{203}
\\
	& \lim_{ |\tilde{\boldsymbol{x}}| \to \infty}
	f ( \tilde{\boldsymbol{x}} )
	= 1
.
\end{align}
The rescaled coordinates \( \tilde{\boldsymbol{x}} \) will be denoted without tilde symbol from now on for further simplicity.

\section{Inhomogeneous BPS Vacuum}

The inhomogeneous BPS vacuum corresponds to the solution with zero vorticity \(n=0\), and the equation \eqref{203} reduces to
\begin{align}
	& \nabla^{2} u
	=
	e^{u} - \varsigma - 1
,\label{301}
\\
	& \lim_{ |\boldsymbol{x}| \to \infty }
	u(\boldsymbol{x})
	= 0
,\label{308}
\end{align}
where \( u = \ln f \) is well-defined on entire plane \(\mathbb{R}^{2}\).

For the homogeneous case \(\varsigma=0\), the solution of \eqref{301} is trivial \(u=0\) and this constant Higgs vacuum solution is the minimum energy configuration with vanishing energy.
On the other hand, once a nontrivial function \( \varsigma(\boldsymbol{x}) \) is turned on, the constant configuration \(u=0\) no longer solves the equation \eqref{301}, and thus we should find a nontrivial vacuum solution subject to the boundary condition \eqref{308}. Thus it is physically important to establish the existence and the uniqueness of the solution of the BPS vacuum equation \eqref{301}.

The current section addresses this question.
We show the existence and the uniqueness of the vacuum solution obeying the equation \eqref{301} under the boundary condition \eqref{308} provided that the inhomogeneity \( \varsigma(\boldsymbol{x}) \) is localized and not too strong.
The local distribution of the externally given inhomogeneity is also a physically acceptable assumption and mathematically compatible with some finite upper bound of \(L^{2}(\mathbb{R}^{2})\) norm.
To be specific, we prove the following theorem.
\begin{theorem}[Existence and uniqueness of inhomogeneous BPS vacuum solution]
\label{thm:vac}
	Assume that the inhomogeneity \( \varsigma \in L^{2}(\mathbb{R}^{2}) \) satisfies the inequality
	\begin{equation}
		\lVert \varsigma \rVert_{2}
		=
		\bigg[
			\int d^{2}x \,
			| \varsigma |^{2}
		\bigg]^{ \frac{1}{2} }
		<
		\sqrt{\frac{2}{\pi}}
	,\label{302}
	\end{equation}
	then there exists a unique weak solution \( u \in W^{1,2} (\mathbb{R}^{2}) \) for the equation \eqref{301}, where \( W^{1,2} (\mathbb{R}^{2}) \) denotes the Sobolev space on \(\mathbb{R}^2\) whose norm is defined as
	\begin{equation}
		\lVert u \rVert_{1,2}
		= \bigg(
			\int d^{2} x \, |u|^{2} +
			\int d^{2} x \, |\partial_{i} u|^{2}
		\bigg)^{\frac{1}{2}}
	.
	\end{equation}
\end{theorem}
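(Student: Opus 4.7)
The plan is to realize the solution as the unique minimizer of the convex functional
\[
I(u)=\int_{\mathbb{R}^{2}}\bigg[\tfrac{1}{2}|\nabla u|^{2}+G(u)-\varsigma\,u\bigg]\,dx, \qquad G(u):=e^{u}-u-1 \ge 0,
\]
over \(W^{1,2}(\mathbb{R}^{2})\). The Euler--Lagrange equation of \(I\) is precisely \eqref{301}, and the boundary condition \eqref{308} is enforced by membership in \(W^{1,2}(\mathbb{R}^{2})\), with pointwise decay recovered a posteriori from elliptic regularity applied to the equation.

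Uniqueness is immediate from strict convexity, since \(G''(u)=e^{u}>0\). A self-contained argument runs as follows: given two weak solutions \(u_{1},u_{2}\), test \(\Delta(u_{1}-u_{2})=e^{u_{1}}-e^{u_{2}}\) with \(u_{1}-u_{2}\in W^{1,2}(\mathbb{R}^{2})\) and integrate by parts to obtain \(\|\nabla(u_{1}-u_{2})\|_{2}^{2}+\int(e^{u_{1}}-e^{u_{2}})(u_{1}-u_{2})\,dx=0\); monotonicity of the exponential forces \(u_{1}=u_{2}\).

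Existence is via the direct method. Sequential weak lower semicontinuity on \(W^{1,2}(\mathbb{R}^{2})\) follows from the convexity and non-negativity of \(G\) (so that \(u\mapsto\int G(u)\) is weakly l.s.c.\ by Fatou along an a.e.-convergent subsequence), weak l.s.c.\ of the Dirichlet integral, and continuity of \(u\mapsto\int\varsigma u\) on \(L^{2}\) via Cauchy--Schwarz since \(\varsigma\in L^{2}\). The essential remaining ingredient is coercivity.

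Coercivity is the main obstacle, and is where the hypothesis \eqref{302} enters decisively. The difficulty is twofold: \(\mathbb{R}^{2}\) admits no Poincar\'e inequality, and \(G(u)\) grows only linearly as \(u\to-\infty\), so no pointwise bound \(G(u)\ge c u^{2}\) is available to absorb \(|\int\varsigma u|\le\|\varsigma\|_{2}\|u\|_{2}\). My plan is to extract two complementary a priori estimates by testing the equation: pairing with \(u\) and using \(u(e^{u}-1)\ge 0\) gives \(\|\nabla u\|_{2}^{2}\le\|\varsigma\|_{2}\|u\|_{2}\), while pairing with \(e^{u}-1\), integrating by parts and completing the square yields the pivotal bound \(\|e^{u}-1\|_{2}\le\|\varsigma\|_{2}\). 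The inequality \(u_{+}\le e^{u}-1\) then controls the positive part of \(u\) directly in \(L^{2}\), and Chebyshev forces the sub-level sets \(\{u\le-\ln 2\}\) to have measure at most \(4\|\varsigma\|_{2}^{2}\), so the region where \(u\) is very negative is confined to a small set. On this small set the \(L^{2}\) mass of \(u_{-}\) is controlled by combining H\"older's inequality with the two-dimensional Gagliardo--Nirenberg interpolation \(\|u\|_{4}^{2}\le C\|u\|_{2}\|\nabla u\|_{2}\); this produces a self-improving inequality of the schematic form \(\|u\|_{2}^{2}\le A\|\varsigma\|_{2}^{2}+B\|\varsigma\|_{2}^{3/2}\|u\|_{2}^{3/2}\), which has bounded solutions precisely under the threshold \eqref{302}, yielding coercivity. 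A minimizing sequence then extracts a weakly convergent subsequence in \(W^{1,2}(\mathbb{R}^{2})\) whose limit is the unique minimizer, hence the unique weak solution; standard elliptic regularity upgrades it to a smooth function satisfying \eqref{308} pointwise.
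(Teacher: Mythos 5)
Your overall framework (minimize the convex functional, uniqueness from strict monotonicity of \(e^{u}\), weak lower semicontinuity) matches the paper's variational strategy, and the uniqueness and w.l.s.c.\ parts are fine. The gap is in the step you yourself identify as essential: coercivity. The two ``a priori estimates'' you propose --- pairing the equation with \(u\) to get \(\|\nabla u\|_{2}^{2}\le\|\varsigma\|_{2}\|u\|_{2}\), and pairing with \(e^{u}-1\) to get \(\|e^{u}-1\|_{2}\le\|\varsigma\|_{2}\) --- are obtained by testing the Euler--Lagrange equation, so they hold only for functions that already solve \eqref{301}. They say nothing about the elements of a minimizing sequence, which is what coercivity (boundedness of minimizing sequences, or a lower bound on the functional as \(\|u\|_{1,2}\to\infty\)) must control. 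Using them to conclude that minimizing sequences are bounded is circular: you would be assuming the existence you are trying to prove. A telltale sign that the argument is not doing what you claim is your closing inequality \(\|u\|_{2}^{2}\le A\|\varsigma\|_{2}^{2}+B\|\varsigma\|_{2}^{3/2}\|u\|_{2}^{3/2}\): since the exponent \(3/2<2\), this has bounded solutions for \emph{every} finite \(\|\varsigma\|_{2}\), not ``precisely under the threshold \eqref{302}'', so the hypothesis \(\|\varsigma\|_{2}<\sqrt{2/\pi}\) never actually enters your existence argument --- yet it must, since it is exactly what makes the variational estimate close.

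For comparison, the paper does not prove coercivity of \(S\) by a priori estimates at all. It verifies the hypotheses of a Jaffe--Taubes criterion (Proposition VI.7.8--VI.8.6 of \cite{Jaffe:1980mj}): strict convexity, Gateaux differentiability, and a positive lower bound for the radial Gateaux derivative \((\nabla_{u}S)[u]\) on a large sphere \(\|u\|_{1,2}=R\). The quantitative input replacing your equation-testing is a functional inequality valid for \emph{all} \(u\in W^{1,2}(\mathbb{R}^{2})\): the pointwise bound \(u(e^{u}-1)\ge|u|^{2}/(1+|u|)\) combined with H\"older and the Hausdorff--Young estimate \(\|u\|_{3}^{3}\le\sqrt{\pi/2}\,\|u\|_{1,2}^{3}\) gives
\begin{equation*}
\int d^{2}x\,u(e^{u}-1)\;\ge\;\frac{\|u\|_{2}^{4}}{\|u\|_{2}^{2}+\sqrt{\pi/2}\,\|u\|_{1,2}^{3}},
\end{equation*}
which leads to \((\nabla_{u}S)[u]\ge\bigl[\sqrt{2/\pi}-\|\varsigma\|_{2}\bigr]R-2/\pi\), positive for large \(R\) exactly when \eqref{302} holds; convexity then localizes the unique global minimizer inside the ball. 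To repair your proof you need an estimate of this type --- one valid on the whole space \(W^{1,2}(\mathbb{R}^{2})\), not just on solutions --- or else restrict the minimization to a closed ball and use such a derivative bound to show the minimizer is interior.
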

The equation \eqref{301} is the Euler-Lagrange equation of a real-valued functional
\begin{equation}
	S[u] = 
	\int d^{2} x \,
	\bigg[
		\frac{1}{2} (\partial_{i} u)^{2}
		+ e^{u} - \varsigma u - u - 1
	\bigg]
,\label{303}
\end{equation}
and its critical point corresponds to the weak solution of the equation.
As in the homogeneous case \cite{Taubes:1979tm}, the functional \eqref{303} is well-defined on entire \( W^{1,2} (\mathbb{R}^{2}) \).
\begin{proposition}
	The functional \(S[u]\) is finite for all \( u \in W^{1,2} (\mathbb{R}^{2}) \).
\end{proposition}
\begin{proof}
	The absolute value of the functional \eqref{303} is bounded by the triangle inequality
	\begin{equation}
		| S[u] |
		\le
		\frac{1}{2}
		\int d^{2} x \, |\partial_{i} u|^{2}
		+
		\int d^{2} x \, | e^{u} - u - 1 |
		+
		\int d^{2} x \, |\varsigma u|
	.\label{304}
	\end{equation}
	Since the first two terms are identical to the homogeneous case, they are finite.
	Application of H{\"o}lder's inequality to the last integral of \eqref{304} gives
	\begin{equation}
		\int d^{2} x \, |\varsigma u|
		\le
		\lVert \varsigma \rVert_{2}
		\lVert u \rVert_{2}
		\le
		\lVert \varsigma \rVert_{2}
		\lVert u\rVert_{1,2}
	\end{equation}
	which is finite for all \( u \in W^{1,2} (\mathbb{R}^{2}) \) by the assumed finiteness of inhomogeneity \eqref{302}.
\end{proof}
In general, existence and uniqueness of the critical point are studied by virtue of the extreme value theorem and the convexity.
As a sufficient condition, we use the following proposition.
\begin{proposition}
\label{thm:J-T}
	Let \(E\) be a real reflexive Banach space, \( S: E \to \mathbb{R} \) be a real functional.
	If \(S\) is strictly convex, Gateaux differentiable and satisfies 
	\begin{equation}
		\inf_{ \lVert u \rVert = R }
		( \nabla _{u} S ) [u] \ge \delta
		\qquad \text{for some }
		R, \delta > 0
	,\label{305}
	\end{equation}
	the global minimizer of \(S\) uniquely exists in the ball
	\(
		\{
			u\in E	\mid
			\lVert u \rVert
			< R
		\}
	\).
\end{proposition}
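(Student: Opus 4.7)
The plan is to apply the direct method of the calculus of variations on the closed ball $\bar B_R = \{u \in E : \lVert u\rVert \le R\}$, then use the radial hypothesis \eqref{305} to push the minimizer into the open interior, where convexity and Gateaux differentiability together promote it to a global minimizer. First, I would exploit reflexivity of $E$: by the Banach--Alaoglu--Kakutani theorem, $\bar B_R$ is weakly compact. The hypotheses on $S$ imply norm-continuity (a convex, Gateaux differentiable functional on a Banach space is locally Lipschitz), and by Mazur's theorem every norm-lower-semicontinuous convex functional is weakly lower semicontinuous. Hence $S|_{\bar B_R}$ attains its infimum at some $u_0 \in \bar B_R$.

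Next, I would rule out the boundary case $\lVert u_0\rVert = R$ by a radial derivative test. If $\lVert u_0\rVert = R$, examine the inward radial curve $\varphi(t) = S((1-t)u_0)$ for $t \in [0,1]$; Gateaux differentiability combined with hypothesis \eqref{305} gives $\varphi'(0) \le -\delta < 0$, so $S((1-t)u_0) < S(u_0)$ for all sufficiently small $t > 0$. Since $(1-t)u_0 \in \bar B_R$, this contradicts the minimality of $u_0$; therefore $\lVert u_0\rVert < R$.

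Finally, because $u_0$ lies in the interior of $\bar B_R$, the Gateaux derivative of $S$ at $u_0$ vanishes identically on $E$. Convexity then upgrades this critical point into a global minimum over all of $E$ via the tangent-line inequality $S(v) \ge S(u_0)$ for every $v \in E$, and strict convexity forces uniqueness of the global minimizer. The delicate step is the interior-minimizer argument, since it is the sole place where the quantitative hypothesis \eqref{305} actually enters; the remainder is a routine application of the direct method together with elementary convex analysis.
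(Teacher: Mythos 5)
Your argument is correct and is essentially the standard direct-method proof that the paper simply cites from Jaffe--Taubes (weak compactness of the closed ball by reflexivity, weak lower semicontinuity from convexity, exclusion of a boundary minimizer via the radial-derivative hypothesis \eqref{305}, then the convexity/strict-convexity upgrade of the interior critical point to a unique global minimizer). One caution: the parenthetical claim that convexity plus Gateaux differentiability alone yields local Lipschitz continuity is false unless the Gateaux derivative is a \emph{bounded} linear functional (a discontinuous linear functional is convex and directionally differentiable everywhere), so it is safer to obtain weak lower semicontinuity directly from the subgradient inequality $S[v] \ge S[u] + (\nabla_{v-u} S)[u]$ with $(\nabla_{\cdot} S)[u] \in E^{*}$, which is the reading of Gateaux differentiability implicitly used in the proposition and certainly satisfied by the functionals \eqref{303} and \eqref{403}.
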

\begin{proof}
	This is a direct consequence from Proposition VI.7.8 to Proposition VI.8.6 of \cite{Jaffe:1980mj}, pp. 275--279.
\end{proof}
Since \( W^{1,2} (\mathbb{R}^{2}) \) is a Hilbert space which is naturally reflexive, Proposition \ref{thm:J-T} guarantees the existence and uniqueness of the global minimizer of the functional \eqref{303}.
The rest of this section shows that each of the conditions of Proposition \ref{thm:J-T} is indeed valid for the functional \eqref{303}, which would complete the proof of Theorem \ref{thm:vac}.
\begin{proposition}
	The functional \(S[u]\) is strictly convex on \( W^{1,2} (\mathbb{R}^{2}) \).
\end{proposition}
\begin{proof}
	The functional \eqref{303} is rearranged as
	\begin{equation}
		S[u] = 
		\frac{1}{2}
		\int d^{2} x \,
			(\partial_{i} u)^{2}
		+
		\int d^{2} x \,
			( e^{u} - u - 1 )
		-
		\int d^{2} x \,
			(\varsigma u)
	.
	\end{equation}
	The first integral is strictly convex due to the strict convexity of the quadratic function and the linearity of the partial derivative.
	The second integral is also strictly convex since the real function \( x \mapsto (e^{x} - x - 1) \) is strictly convex for all \(x\in\mathbb{R}\).
	The linearity of integration implies that the last integral is linear in \(u\).	
	To sum up, the functional \eqref{303} is strictly convex.
\end{proof}
\begin{proposition}
	The functional \(S[u]\) is Gateaux differentiable on \( W^{1,2} (\mathbb{R}^{2}) \).
\end{proposition}
\begin{proof}
	The definition of Gateaux derivative reads
	\begin{equation}
	\begin{aligned}[b]
		( \nabla _{h} S ) [u]	
		& =
		\lim_{\epsilon \to 0}
		\frac
			{ S[u+\epsilon h] - S[u] }
			{\epsilon}
	\\	& =
		\int d^{2} x \,
		\Big[
			( \partial_{i} u ) ( \partial_{i} h )
			+ h( e^{u} -1 )
		\Big]
		+ \lim_{\epsilon \to 0}
		\int d^{2} x \,
		\bigg[
			\frac{1}{2} \epsilon ( \partial_{i} h )^{2}
			+ \frac
				{ e^{\epsilon h} - \epsilon h - 1 }
				{\epsilon}
		 e^{u}
		\bigg]
	\\	& \quad
		- \int d^{2} x \,
		( \varsigma h )
	.
	\end{aligned}
	\end{equation}
	Two integrals in the second line are identical to the homogeneous case \( \varsigma(\boldsymbol{x}) = 0 \); the first term is finite and the second term vanishes for any \(h, u \in W^{1,2} (\mathbb{R}^{2})\).
	Application of H{\"o}lder's inequality with the assumption \eqref{302} shows that the last line is finite, hence the Gateaux derivative
	\begin{equation}
		( \nabla _{h} S ) [u]
		=
		\int d^{2} x \,
		\Big[
			( \partial_{i} u ) ( \partial_{i} h )
			+ h( e^{u} -1 )
			- \varsigma h
		\Big]
	\label{306}
	\end{equation}
	exists for all \(h, u \in W^{1,2} (\mathbb{R}^{2})\).
\end{proof}

For the last condition \eqref{305}, observe the lower bound of the radial derivative from \eqref{306}
\begin{equation}
\begin{aligned}[b]
	( \nabla _{u} S ) [u]
	& =
	\int d^{2} x \,
	\Big[
		( \partial_{i} u )^{2}
		+ u ( e^{u} - 1 )
		- \varsigma u
	\Big]
\\	& \ge
	\int d^{2} x \,
	| \partial_{i} u |^{2}
	+
	\int d^{2} x \, 
	u (e^{u} - 1)
	-
	\int d^{2} x \, 
	|\varsigma| |u|
\\	& \ge
	\lVert u \rVert^{2}_{1, 2}
	- \lVert u \rVert^{2}_{2}
	+
	\int d^{2} x \, 
	u (e^{u} - 1)
	-
	\lVert \varsigma \rVert_{2}
	\lVert u \rVert_{2}
,\label{307}
\end{aligned}
\end{equation}
where the triangle inequality and H{\"o}lder's inequality are applied.
Unlike the homogeneous cases, it is necessary to find a specific value of lower bound of the third term in the last line of \eqref{307}.
\begin{lemma}
\label{lem:exp}
	The following inequality holds for any \( u \in W^{1,2} (\mathbb{R}^{2}) \)
	\begin{equation}
		\int d^{2} x \, 
		u (e^{u} - 1)
		\ge
		\frac
		{
			\lVert u \rVert^{4}_{2}
		}
		{
			\lVert u \rVert^{2}_{2}
			+
			\sqrt{\dfrac{\pi}{2}}
			\lVert u \rVert^{3}_{1,2}
		}
	.
	\end{equation}
\end{lemma}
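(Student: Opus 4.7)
The strategy is to bound $\int u(e^{u}-1)\,dx$ from below via a pointwise inequality, a weighted Cauchy--Schwarz step, and a two-dimensional Sobolev-type embedding.

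First, I would establish the pointwise inequality
\begin{equation*}
u(e^{u}-1)\ \ge\ \frac{u^{2}}{1+|u|}\qquad\text{for every }u\in\mathbb{R}.
\end{equation*}
For $u\ge 0$ this follows immediately from $e^{u}\ge 1+u$. For $u=-s<0$ it is equivalent to $\phi(s):=1-(1+s)e^{-s}\ge 0$, which holds because $\phi(0)=0$ and $\phi'(s)=s\,e^{-s}\ge 0$. This bound is essentially the tightest of its form: weights of the type $1+c|u|$ with $c<1$ already fail to dominate $u/(e^{u}-1)$ at large $|u|$, while heavier weights such as $1+u^{2}$ fail even for small $|u|$ on the negative side.

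Next, I would apply the Cauchy--Schwarz inequality with weight $w=1+|u|$, writing $u^{2}=\bigl(|u|/\sqrt{w}\bigr)\bigl(|u|\sqrt{w}\bigr)$, to obtain
\begin{equation*}
\lVert u \rVert_{2}^{4}\ \le\ \Bigl[\int\frac{u^{2}}{1+|u|}\,dx\Bigr]\Bigl[\int u^{2}(1+|u|)\,dx\Bigr]\ \le\ \Bigl[\int u(e^{u}-1)\,dx\Bigr]\bigl(\lVert u \rVert_{2}^{2}+\lVert u \rVert_{3}^{3}\bigr),
\end{equation*}
using the pointwise inequality above for the first factor and expanding $u^{2}(1+|u|)=u^{2}+|u|^{3}$ in the second.

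The remaining ingredient is a two-dimensional Sobolev-type embedding of the form $\lVert u \rVert_{3}^{3}\le\sqrt{\pi/2}\,\lVert u \rVert_{1,2}^{3}$, valid for every $u\in W^{1,2}(\mathbb{R}^{2})$; substituting it into the Cauchy--Schwarz bound and rearranging then yields the statement. Since the first two steps are essentially forced by the shape of $u(e^{u}-1)$, the main obstacle lies in this last step. The qualitative embedding $W^{1,2}(\mathbb{R}^{2})\hookrightarrow L^{3}(\mathbb{R}^{2})$ is classical, but pinning down the explicit constant $\sqrt{\pi/2}$ matters, because the same constant reappears as the sharp threshold $\lVert\varsigma\rVert_{2}<\sqrt{2/\pi}$ in Theorem~\ref{thm:vac} and is exactly what drives the coercivity~\eqref{305} from~\eqref{307} in Proposition~\ref{thm:J-T}. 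A bound with constant no larger than $\sqrt{\pi/2}$ can, for instance, be produced by chaining the H\"older interpolation $\lVert u \rVert_{3}^{3}\le\lVert u \rVert_{2}\lVert u \rVert_{4}^{2}$, the Ladyzhenskaya-type estimate $\lVert u \rVert_{4}^{4}\le 2\lVert u \rVert_{2}^{2}\lVert\nabla u \rVert_{2}^{2}$, and the elementary maximisation $\lVert u \rVert_{2}^{2}\lVert\nabla u \rVert_{2}\le\tfrac{2}{3\sqrt{3}}\lVert u \rVert_{1,2}^{3}$ obtained by optimising $a^{2}b$ on $a^{2}+b^{2}=1$.
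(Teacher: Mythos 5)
Your proposal is correct, and its first two steps coincide with the paper's: the same pointwise bound $u(e^{u}-1)\ge |u|^{2}/(1+|u|)$ (proved for $u\ge0$ and $u<0$ exactly as you do) followed by the same weighted Cauchy--Schwarz/H\"older rearrangement giving $\int u(e^{u}-1)\,d^{2}x \ge \lVert u\rVert_{2}^{4}/(\lVert u\rVert_{2}^{2}+\lVert u\rVert_{3}^{3})$. Where you genuinely diverge is the final ingredient, the quantitative $L^{3}$ bound: the paper works on the Fourier side, combining the Hausdorff--Young inequality $\lVert u\rVert_{3}\le\lVert\hat u\rVert_{3/2}$ with H\"older for $(4/3,4)$ and the explicit computation $\int d^{2}k\,(1+k^{2})^{-3}=\pi/2$, which produces precisely the constant $\sqrt{\pi/2}$ appearing in the lemma; you instead stay in real space, chaining $\lVert u\rVert_{3}^{3}\le\lVert u\rVert_{2}\lVert u\rVert_{4}^{2}$, the Ladyzhenskaya bound $\lVert u\rVert_{4}^{4}\le 2\lVert u\rVert_{2}^{2}\lVert\nabla u\rVert_{2}^{2}$, and the elementary optimisation $\lVert u\rVert_{2}^{2}\lVert\nabla u\rVert_{2}\le\tfrac{2}{3\sqrt3}\lVert u\rVert_{1,2}^{3}$, which yields $\lVert u\rVert_{3}^{3}\le\tfrac{2\sqrt2}{3\sqrt3}\lVert u\rVert_{1,2}^{3}$ with $\tfrac{2\sqrt2}{3\sqrt3}\approx0.54<\sqrt{\pi/2}\approx1.25$, so the stated inequality follows a fortiori. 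Your route is more elementary (no Fourier analysis) and actually sharper: if the constant $\tfrac{2\sqrt2}{3\sqrt3}$ were carried through the coercivity estimate \eqref{3.20} of Proposition \ref{prop37}, the admissible threshold on $\lVert\varsigma\rVert_{2}$ in \eqref{302} could be relaxed from $\sqrt{2/\pi}$ to $\tfrac{3\sqrt3}{2\sqrt2}$, whereas the paper's Fourier argument is tailored to reproduce exactly the constant quoted in the lemma.
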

\begin{proof}
	On the region where \( u < 0 \), the integrand is bounded below by
	\begin{equation}
		u (e^{u} - 1)
		= |u| ( 1 - e^{ -|u| } )
		\ge |u|
		\bigg(
			1 - \frac{1}{ 1 + |u| }
		\bigg)
		= \frac{ |u|^{2} }{ 1+|u| }
	,
	\end{equation}
	and the bound is also valid on the region where \( u \ge 0 \) as
	\begin{equation}
		u (e^{u} - 1)
		= |u| ( e^{|u|} - 1 )
		\ge |u|^{2}
		\ge \frac{ |u|^{2} }{ 1+|u| }
	.
	\end{equation}
	The integral of the lower bound is estimated by rearranging H{\"o}lder's inequality
	\begin{equation}
		\bigg(
			\int d^{2} x \, 
			\frac{ |u|^{2} }{ 1+|u| }
		\bigg)^{1/2}
		\ge
		\frac{\displaystyle
			\int d^{2} x \, |u|^{2}
		}{\displaystyle
			\bigg[
				\int d^{2} x \, 
				\Big(
					|u|^{2} + |u|^{3}
				\Big)
			\bigg]^{1/2}
		}
		=
		\frac{
			\lVert u \rVert_{2}^{2}
		}{
			\big(
				\lVert u \rVert_{2}^{2}
				+
				\lVert u \rVert_{3}^{3}
			\big)^{1/2}
		}
	.
	\end{equation}
	The \(L^{3}(\mathbb{R}^{2})\) norm in the denominator is handled by the Hausdorff-Young inequality for \(p=3\) together with H{\"o}lder's inequality for \( (p',~q') = (4/3,~4)\)
	\begin{equation}
	\begin{aligned}[b]
		\lVert u \rVert_{3}
		\le
		\lVert \hat{u} \rVert_{3/2}
		& \le
		\bigg[
			\Big\lVert
				| \hat{u} |^{3/2}
				( 1 + k^{2} )^{3/4}
			\Big\rVert_{4/3}
			\times
			\Big\lVert
				( 1 + k^{2} )^{-3/4}
			\Big\rVert_{4}
		\bigg]^{2/3}
	\\	& =
		\bigg[
			\int d^{2} k \,
			| \hat{u} |^{2}
			( 1 + k^{2} )
		\bigg]^{1/2}
		\times
		\bigg[
			\int d^{2} k \,
			( 1 + k^{2} )^{-3}
		\bigg]^{1/6}
		=
		\sqrt[6]{ \frac{\pi}{2} }
		\lVert u \rVert_{1,2}
	,
	\end{aligned}
	\end{equation}
	where \(\hat{u}\) is the Fourier transform of \(u\) and the unitarity of Fourier transform in \(L^{2}(\mathbb{R}^{2})\) is used,
	\begin{equation}
		\int d^{2} k \,
		| \hat{u} |^{2}
		( 1 + k^{2} )
		=
		\lVert \hat{u} \rVert_{2}^{2}
		+
		\lVert k^{i}\hat{u} \rVert_{2}^{2}
		=
		\lVert u \rVert_{2}^{2}
		+
		\lVert \partial_{i} u \rVert_{2}^{2}
		=
		\lVert u \rVert_{1,2}^{2}
	.
	\end{equation}
	Collection of the aforementioned inequalities concludes that
	\begin{equation}
		\int d^{2} x \, 
		u (e^{u} - 1)
		\ge
		\int d^{2} x \, 
		\frac{ |u|^{2} }{ 1+|u| }
		\ge
		\frac{
			\lVert u \rVert_{2}^{4}
		}{
			\lVert u \rVert_{2}^{2}
			+
			\lVert u \rVert_{3}^{3}
		}
		\ge
		\frac
		{
			\lVert u \rVert^{4}_{2}
		}
		{
			\lVert u \rVert^{2}_{2}
			+
			\sqrt{\dfrac{\pi}{2}}
			\lVert u \rVert^{3}_{1,2}
		}
	.
	\end{equation}
\end{proof}
\begin{proposition} \label{prop37}
	There exist \(R, \delta > 0\) satisfying the inequality \eqref{305}.
\end{proposition}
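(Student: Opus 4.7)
The plan is to substitute Lemma \ref{lem:exp} into the bound \eqref{307}, producing the single two-variable inequality
\begin{equation*}
( \nabla_{u} S )[u] \ge
\lVert u \rVert_{1,2}^{2} - \lVert u \rVert_{2}^{2}
+ \frac{ \lVert u \rVert_{2}^{4} }{ \lVert u \rVert_{2}^{2} + \sqrt{\pi/2}\, \lVert u \rVert_{1,2}^{3} }
- \lVert \varsigma \rVert_{2} \lVert u \rVert_{2},
\end{equation*}
and then to minimize the right-hand side on the sphere $\lVert u \rVert_{1,2} = R$, parametrized by the single real variable $a := \lVert u \rVert_{2} \in [0,R]$ (the upper endpoint is forced by the elementary inclusion $\lVert u \rVert_{2} \le \lVert u \rVert_{1,2}$).

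A short algebraic manipulation collapses the first three terms into
\begin{equation*}
R^{2} - \frac{ \sqrt{\pi/2}\, R^{3} a^{2} }{ a^{2} + \sqrt{\pi/2}\, R^{3} },
\end{equation*}
which is monotonically decreasing in $a$ since $a^{2}/(a^{2}+c)$ is increasing in $a$ for any $c>0$. Combined with the linearly decreasing term $-\lVert \varsigma \rVert_{2} a$, the entire lower bound is decreasing in $a$ on $[0,R]$, so its infimum on the sphere is attained at $a = R$ and simplifies to
\begin{equation*}
\frac{ R^{2} }{ 1 + \sqrt{\pi/2}\, R }
- \lVert \varsigma \rVert_{2} R
=
\frac{ R \bigl[ ( 1 - \sqrt{\pi/2}\, \lVert \varsigma \rVert_{2} ) R - \lVert \varsigma \rVert_{2} \bigr] }{ 1 + \sqrt{\pi/2}\, R }.
\end{equation*}

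Finally, the theorem's hypothesis \eqref{302} is equivalent to $1 - \sqrt{\pi/2}\, \lVert \varsigma \rVert_{2} > 0$, so choosing any $R > \lVert \varsigma \rVert_{2} / ( 1 - \sqrt{\pi/2}\, \lVert \varsigma \rVert_{2} )$ makes the bracketed factor strictly positive, and this strictly positive value may be taken as $\delta$. The subtle point I expect is the simultaneous appearance of $\lVert u \rVert_{2}$ and $\lVert u \rVert_{1,2}$ in the combined estimate; the argument succeeds because the algebraic cancellation turns the bound into a monotone function of $a$ alone, which is exactly what allows the minimization to be performed explicitly and which, in turn, dictates the precise form of the $L^{2}$ bound \eqref{302}.
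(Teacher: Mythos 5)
Your proof is correct, and it rests on the same two ingredients as the paper's argument: the bound \eqref{307} combined with Lemma \ref{lem:exp}, followed by a minimization over $\lVert u\rVert_{2}$ at fixed $\lVert u\rVert_{1,2}=R$. The execution differs in a way worth noting. The paper sets $\lVert u\rVert_{2}=t\lVert u\rVert_{1,2}$, performs a polynomial division of the fraction in \eqref{3.20}, discards the positive remainder, and then asserts that the resulting polynomial is monotonically decreasing in $t$ so that $t=1$ gives the minimum --- a step that is only straightforwardly justified once $r$ is taken large --- finally obtaining the lower bound $\big[\sqrt{2/\pi}-\lVert\varsigma\rVert_{2}\big]R-2/\pi$ for sufficiently large $R$. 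You instead keep the exact expression, use the algebraic cancellation $-a^{2}+a^{4}/(a^{2}+\sqrt{\pi/2}\,R^{3})=-\sqrt{\pi/2}\,R^{3}a^{2}/(a^{2}+\sqrt{\pi/2}\,R^{3})$, and deduce monotone decrease in $a=\lVert u\rVert_{2}$ from the elementary fact that $a^{2}/(a^{2}+K)$ is increasing for $K>0$; this yields the exact endpoint value $R^{2}/(1+\sqrt{\pi/2}\,R)-\lVert\varsigma\rVert_{2}R$, which is positive precisely when $R>\lVert\varsigma\rVert_{2}/(1-\sqrt{\pi/2}\,\lVert\varsigma\rVert_{2})$, a condition made available by \eqref{302}. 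Your route is thus slightly cleaner and sharper: it drops no terms, its monotonicity step is valid for every $R$ rather than only asymptotically, and it gives an explicit admissible $R$ and $\delta$, while reproducing the same large-$R$ behavior $\sqrt{2/\pi}\,R-2/\pi$ as the paper's estimate.
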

\begin{proof}
	Since
	\(
		0
		\le \lVert u \rVert_{2}
		\le \lVert u \rVert_{1, 2}
	\),
	there exists \( t \in [0, 1] \) satisfying \( \lVert u \rVert_{2} = t \lVert u \rVert_{1,2} \).
	Denote \( r = \lVert u \rVert_{1,2} \) and rewrite the right hand side of the inequality \eqref{307} using Lemma \ref{lem:exp}
	\begin{equation} \label{3.20}
	\begin{aligned}[b]
		( \nabla _{u} S ) [u]
		& \ge
		( 1 - t ^{2} ) r^{2}
		+
		\frac{
			t^{4} r^{2}
		}{
			t^{2}
			+
			\sqrt{ \dfrac{\pi}{2} } r
		}
		-
		t \, r \lVert \varsigma \rVert_{2}
	\\	& =
		( 1 - t^{2} ) r^{2}
		+
		\bigg[
			\sqrt{\frac{2}{\pi}}
			t^{4}
			-
			t \lVert \varsigma \rVert_{2}
		\bigg] r
		-
		\frac{ 2 t^{6} }{\pi}
		+
		\frac{ 2 t^{8} }{\pi}
		\frac{ 1 }{
			t^{2}
			+
			\sqrt{ \dfrac{\pi}{2} } r
		}
	\\	& \ge
		( 1 - t^{2} ) r^{2}
		+
		\bigg[
			\sqrt{\frac{2}{\pi}}
			t^{4}
			-
			t \lVert \varsigma \rVert_{2}
		\bigg] r
		-
		\frac{ 2 t^{6} }{\pi}
	.
	\end{aligned}
	\end{equation}
	The right hand side is a monotonically decreasing polynomial in \(t\),
	and thus is minimized at \(t=1\).
	Then,
	\begin{equation}
		( \nabla _{u} S ) [u]
		\ge
		\bigg[
			\sqrt{\frac{2}{\pi}}
			-
			\lVert \varsigma \rVert_{2}
		\bigg] r
		-
		\frac{2}{\pi}
	.
	\end{equation}
	Since the linear coefficient of the lower bound is positive definite by the assumption \eqref{302}, the inequality
	\begin{equation}
		\inf_{ \lVert u \rVert_{1,2} = R }
		( \nabla _{u} S ) [u] \ge
		\bigg[
			\sqrt{\frac{2}{\pi}}
			-
			\lVert \varsigma \rVert_{2}
		\bigg] R
		-
		\frac{2}{\pi}
		\ge \delta > 0
	\end{equation}
	holds for a sufficiently large \(R\).
\end{proof}

As a concluding remark, observe the right hand side of \eqref{301} is square-integrable
\begin{equation}
	\nabla^{2} u
	=
	e^{u} - \varsigma - 1
	=
	(e^{u} - u - 1) + u - \varsigma
	\in L^{2}(\mathbb{R}^{2})
.
\end{equation}
Integration by parts twice gives
\begin{equation}
	\sum_{i,j}
	\int d^{2}x \,
	| \partial_{i} \partial_{j} u |^{2}
	=
	\sum_{i,j}
	\int d^{2}x \,
	( \partial_{i}^{2} u )
	( \partial_{j}^{2} u )
	=
	\int d^{2}x \,
	| \nabla^{2} u |^{2}
	< \infty
\end{equation}
which implies \(u \in W^{2,2} (\mathbb{R}^{2}) \), and the Morrey-Sobolev embedding \cite{Evans:2010} shows that the obtained weak solution is indeed continuous \(u \in C^{0}(\mathbb{R}^{2})\).
Further analysis on the regularity of the weak solution can be addressed only after smoothness of inhomogeneity \(\varsigma(\boldsymbol{x})\) is provided by physics.

\section{BPS Multi-Vortex Solutions}

In this section, we prove the existence and the uniqueness of multi-vortex solutions of \eqref{203} with nonzero vorticity \(n\).
As in the homogeneous case \cite{Taubes:1979tm}, let us define background functions
\begin{equation} \label{400}
	u_{0} =
	- \ln \prod_{a=1}^{n}
		\bigg(
			1 + \frac
				{ \lambda }
				{ | \boldsymbol{x}
					- \boldsymbol{x}_{a} |^{2}
				}
		\bigg)
, \qquad
	g_{0} =
	\sum_{a=1}^{n}
	\frac{ 4\lambda }
		{ ( | \boldsymbol{x}
			- \boldsymbol{x}_{a} |^{2}
			+ \lambda )^{2}
		}
,
\end{equation}
where \( \lambda > 4n \) is a free parameter. Then, \eqref{203} becomes
\begin{equation}
	\nabla^{2} u
	= e^{u} e^{u_0} + g_{0} - \varsigma - 1
, \qquad
	\lim_{ |\boldsymbol{x}| \to \infty }
	u(\boldsymbol{x})
	= 1
,\label{401}
\end{equation}
where \( u = \ln f - u_{0} \).
All the singular behaviors are traded from \(\ln f\) to the background \(u_{0}\) so that subtraction of \(u_0\) from \(\ln f\) lets \(u\) be nonsingular.
For the homogeneous case with \( \varsigma(\boldsymbol{x}) = 0 \), the existence and the uniqueness of the vortex solution for \eqref{401} are proven both on \(\mathbb{R}^2\) \cite{Taubes:1979tm} and on a lattice with periodic boundary condition \cite{Wang:1992}.
In \cite{Han:2015tga}, the existence and the uniqueness for the inhomogeneous case are proved on a lattice domain with periodic boundary condition, under the finite source-energy condition \( \varsigma \in L^{2}(\mathbb{R}^{2}) \). This section presents the existence theorem for the equation \eqref{401} on \(\mathbb{R}^2\) if \( \varsigma \) satisfies the condition \eqref{302}.
\begin{theorem}[Existence and uniqueness of BPS soliton]
\label{thm:sol}
	Assume that the inhomogeneity \( \varsigma \in L^{2}(\mathbb{R}^{2}) \) satisfies the inequality
	\begin{equation}
		\lVert \varsigma \rVert_{2}
		<
		\sqrt{\frac{2}{\pi}}
	,\label{402}
	\end{equation}
	then there exists a unique weak solution \( u \in W^{1,2} (\mathbb{R}^{2}) \) for the equation \eqref{401}.
\end{theorem}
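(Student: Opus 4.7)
The plan is to transcribe the Section 3 variational program to equation \eqref{401}. I would introduce the functional
\begin{equation*}
    S[u] = \int d^2x \left[\tfrac{1}{2}|\partial_i u|^2 + e^{u_0}(e^u - u - 1) + (e^{u_0} + g_0 - 1 - \varsigma)\,u\right],
\end{equation*}
whose Euler--Lagrange equation is precisely \eqref{401}, so that a weak solution in $W^{1,2}(\mathbb{R}^2)$ amounts to a critical point of $S$. As in Section 3, I would then deduce existence and uniqueness from Proposition \ref{thm:J-T}, reducing the problem to verifying (i) finiteness of $S$, (ii) strict convexity and Gateaux differentiability, and (iii) the coercivity estimate \eqref{305}.

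Items (i) and (ii) should follow the blueprints of Propositions 3.3, 3.5, and 3.6 with only minor modifications. From \eqref{400} one checks that $u_0 \le 0$, $g_0 \ge 0$, both decay like $|\boldsymbol{x}|^{-2}$ at infinity and are locally square-integrable near each vortex center, so $u_0, g_0, e^{u_0}-1 \in L^2(\mathbb{R}^2)$. The linear coupling of $u$ to $g_0 + e^{u_0} - 1 - \varsigma$ is then finite by H\"older exactly as in Proposition 3.3, while the convex piece $e^{u_0}(e^u - u - 1)$ is pointwise bounded by $(e^u - u - 1)$ using $0 < e^{u_0} \le 1$, so the Taubes--Jaffe estimates reused in Section 3 still apply. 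Strict convexity is immediate because $\partial_u^2[e^{u_0}(e^u - u - 1)] = e^{u_0+u} > 0$, and Gateaux differentiability is obtained by the exact template of Proposition 3.6 with the extra bounded weight $e^{u_0}(\boldsymbol{x})$.

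The main obstacle is the coercivity condition \eqref{305}. A short computation yields
\begin{equation*}
    (\nabla_u S)[u] = \int d^2x \left[|\partial_i u|^2 + u(e^{u+u_0}-1) + (g_0 - \varsigma)\,u\right],
\end{equation*}
so what is needed is a lower bound for $\int u(e^{u+u_0}-1)\,dx$ analogous to Lemma \ref{lem:exp}. My plan is to exploit the identity $u(e^{u+u_0}-1) = (u+u_0)(e^{u+u_0}-1) - u_0(e^{u+u_0}-1)$, apply the pointwise inequality $t(e^t-1) \ge t^2/(1+|t|)$ (the very bound proved inside Lemma \ref{lem:exp}) with $t = u+u_0$ to the first piece, and control the $u_0$-weighted correction by splitting $\mathbb{R}^2$ on the sign of $u+u_0$ and invoking $u_0 \in L^2$ together with $|e^{u+u_0}-1| \le 1$ on $\{u+u_0 \le 0\}$. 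Combined with $|\int g_0 u| \le \|g_0\|_2 \|u\|_{1,2}$, the rest of the argument should run essentially verbatim to Proposition \ref{prop37}, producing
\begin{equation*}
    (\nabla_u S)[u] \ge \bigl[\sqrt{\tfrac{2}{\pi}} - \|\varsigma\|_2\bigr]R - C
\end{equation*}
for $\|u\|_{1,2}=R$ sufficiently large, where $C = C(u_0, g_0)$ is a finite constant independent of $u$. Hypothesis \eqref{402} then guarantees the bracket is positive, \eqref{305} holds, and Proposition \ref{thm:J-T} delivers the unique weak solution.
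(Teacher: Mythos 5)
Your setup (the functional, finiteness, strict convexity, Gateaux differentiability) matches the paper's, but the coercivity step --- the heart of the proof --- has a genuine gap. The paper does not decompose $u(e^{u+u_0}-1)$ as you do; it invokes Taubes' pointwise inequalities to prove (its Lemma \ref{lemma2})
\begin{equation*}
  u\,(e^{u+u_0}+g_0-1)\;\ge\;\Bigl(1-\tfrac{4n}{\lambda}\Bigr)\frac{|u|^2}{1+|u|^2}\;-\;\frac{\lambda}{16n}\,(u_0+g_0)^2 ,
\end{equation*}
so that the entire $g_0 u$ term and all $u_0$-dependence are absorbed into (a) a slightly reduced coefficient $1-4n/\lambda$ on the coercive term and (b) an additive constant $\frac{\lambda}{16n}\lVert u_0+g_0\rVert_2^2$. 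The coefficient of $r=\lVert u\rVert_{1,2}$ then becomes $\sqrt{2/\pi}\,\frac{\lambda-4n}{\lambda}-\lVert\varsigma\rVert_2$, which is made positive by choosing the free parameter $\lambda$ large; this is exactly where the hypothesis \eqref{402} is used with no room to spare. Your route instead peels off $\int g_0 u$ by H\"older and handles the correction $-\int u_0(e^{u+u_0}-1)$ separately, and both of these produce terms that are \emph{linear} in $\lVert u\rVert_{1,2}$, not constants: $|\int g_0 u|\le\lVert g_0\rVert_2\lVert u\rVert_2$, and the negative part of the correction is at best bounded by $\lVert u_0\rVert_2\lVert u\rVert_2+\lVert u_0\rVert_2^2$ (your stated bound via $|e^{u+u_0}-1|\le 1$ on $\{u+u_0\le0\}$ actually needs $u_0\in L^1(\mathbb{R}^2)$, which fails since $u_0\sim -\lambda|\boldsymbol{x}|^{-2}$ at infinity). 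Hence your final display should read $\bigl[\sqrt{2/\pi}-\lVert\varsigma\rVert_2-\lVert g_0\rVert_2-c\lVert u_0\rVert_2\bigr]R-C$, and the extra coefficients cannot be discarded. They also cannot be made small: $\lVert g_0\rVert_2^2=O(1/\lambda)$ while $\lVert u_0\rVert_2^2=O(\lambda)$, so no choice of $\lambda$ shrinks both. Since \eqref{402} allows $\lVert\varsigma\rVert_2$ arbitrarily close to $\sqrt{2/\pi}$, the bracket can be negative, and then the lower bound fails along spread-out test functions with $\lVert u\rVert_2/\lVert u\rVert_{1,2}\to1$ (for which $(1-t^2)r^2$ stays bounded). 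So your argument proves coercivity only for $\lVert\varsigma\rVert_2$ small compared to $n$-dependent constants, not under the stated hypothesis.

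There is also a secondary technical obstruction in the first piece of your decomposition: to convert $\int\frac{|u+u_0|^2}{1+|u+u_0|}$ into a norm bound you would apply Lemma \ref{lem:exp} with argument $u+u_0$, but $u_0\notin W^{1,2}(\mathbb{R}^2)$ (its gradient behaves like $2/|\boldsymbol{x}-\boldsymbol{x}_a|$ near each vortex, which is not locally square integrable in two dimensions), so the lemma is vacuous there; one would have to redo the H\"older/Hausdorff--Young chain using $\lVert u+u_0\rVert_3\le\lVert u\rVert_3+\lVert u_0\rVert_3$, which is repairable but not ``verbatim.'' The essential missing idea, however, is the Taubes-type pointwise estimate with the tunable parameter $\lambda>4n$: it is what lets the paper trade the dangerous $u_0$- and $g_0$-terms for a constant plus an arbitrarily small loss in the coercivity coefficient, and without it the margin $\sqrt{2/\pi}-\lVert\varsigma\rVert_2$ cannot absorb the vortex background.
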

As in the previous section, we prove the theorem by defining a real-valued action which produces \eqref{401} as its Euler-Lagrange equation,
\begin{equation}
	S[u] = 
	\int d^{2} x \,
	\bigg[
		\frac{1}{2} (\partial_{i} u)^{2}
		+ (e^{u} - 1) e^{u_0}
		- (1 - g_{0}) u
		- \varsigma u
	\bigg]
.\label{403}
\end{equation}
Now we are going to show that the conditions for Proposition \ref{thm:J-T} holds as in the vacuum case \(n=0\) with some modification, which will prove the Theorem \ref{thm:sol}.

\begin{proposition}
	The functional \(S[u]\) is finite for all \( u \in W^{1,2} (\mathbb{R}^{2}) \).
\end{proposition}
\begin{proof}
	The absolute value of the functional \eqref{403} is bounded by the triangle inequality
	\begin{equation}
	\begin{aligned}[b]
		| S[u] |
		& \le
		\frac{1}{2}
		\int d^{2} x \, |\partial_{i} u|^{2}
		+
		\int d^{2} x \, | (e^{u} - u - 1) e^{u_0} |
		+
		\int d^{2} x \, | (e^{u_0} + g_{0} - 1) u |
	\\	& \quad +
		\int d^{2} x \, | \varsigma u|
	,
	\end{aligned}
	\end{equation}
	where the first line is identical to the homogeneous case and hence finite.
	Application of H{\"o}lder's inequality with the assumption \eqref{402} to the second line completes the proof.
\end{proof}
\begin{proposition}
	The functional \(S[u]\) is strictly convex on \( W^{1,2} (\mathbb{R}^{2}) \).
\end{proposition}
\begin{proof}
	The functional \eqref{403} is rearranged as
	\begin{equation}
		S[u] = 
		\frac{1}{2}
		\int d^{2} x \,
			(\partial_{i} u)^{2}
		+
		\int d^{2} x \,
			( e^{u} - u - 1 ) e^{u_0}
		-
		\int d^{2} x \,
			( 1 - g_{0} - e^{u_0} + \varsigma ) u
	.
	\end{equation}
	By the same logic as in the vacuum case, the first integral is strictly convex.
	As the real function \( x \mapsto (e^{x} - x - 1)e^{x_0} \) is strictly convex everywhere for arbitrary constant \(x_{0}\in\mathbb{R}\), so is the second integral.
	Since the last integral is linear in \(u\), the functional \eqref{403} is strictly convex.
\end{proof}
\begin{proposition}
	The functional \(S[u]\) is Gateaux differentiable on \( W^{1,2} (\mathbb{R}^{2}) \).
\end{proposition}
\begin{proof}
	The definition of Gateaux derivative reads
	\begin{equation}
	\begin{aligned}[b]
		( \nabla _{h} S ) [u]
		& =
		\int d^{2} x \,
		\Big[
			( \partial_{i} u ) ( \partial_{i} h )
			+ h( e^{u} -1 ) e^{u_0}
			- h( 1 - g_{0} - e^{u_0} )
		\Big]
	\\	& \quad
		+ \lim_{\epsilon \to 0}
		\int d^{2} x \,
		\bigg[
			\frac{1}{2} \epsilon ( \partial_{i} h )^{2}
			+ \frac
				{ e^{\epsilon h} - \epsilon h - 1 }
				{\epsilon}
		 e^{u} e^{u_0}
		\bigg]
		- \int d^{2} x \,
		(\varsigma h)
	,
	\end{aligned}
	\end{equation}
	and the obtained derivative is finite by the direct application of the homogeneous case with the help of the assumption \eqref{402}.
	Thus the Gateaux derivative
	\begin{equation}
		( \nabla _{h} S ) [u]
		=
		\int d^{2} x \,
		\Big[
			( \partial_{i} u ) ( \partial_{i} h )
			+ h( e^{u} -1 ) e^{u_0}
			- h( 1 - g_{0} - e^{u_0} )
			- \varsigma h
		\Big]
	\end{equation}
	exists for all \(h, u \in W^{1,2} (\mathbb{R}^{2})\).
\end{proof}
\begin{lemma} \label{lemma2}
	Let \(u_0\) and \(g_0\) be defined by \eqref{400} with \(\lambda > 4n\).
	Then for all \( u \in W^{1,2}(\mathbb{R}^2) \),
	\begin{equation}
	u( e^{u + u_0} + g_0 - 1 ) 
	\ge
	\left(
		1 - \frac{4n}\lambda
	\right)
	\frac{|u|^2}{1 + |u|^2}
	- \frac\lambda{16n} (u_0 + g_0)^{2}
	.
	\end{equation}
\end{lemma}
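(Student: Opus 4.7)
The plan is to separate the left-hand side into a manifestly non-negative Taylor-remainder piece, a quadratic term, and a cross term; to apply Young's inequality to the cross term with weight dictated by matching the coefficients $1 - 4n/\lambda$ and $\lambda/(16n)$; and finally to verify the remaining pointwise one-variable inequality in $u$ with $u_0 \le 0$ fixed.

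Concretely, I would write
\begin{equation*}
	e^{u+u_0} + g_0 - 1 = \bigl[ e^{u+u_0} - 1 - (u+u_0) \bigr] + u + (u_0 + g_0),
\end{equation*}
the bracket being non-negative by convexity $e^x \ge 1+x$. Multiplying by $u$ and applying Young's inequality $u(u_0+g_0) \ge -\varepsilon u^2/2 - (u_0+g_0)^2/(2\varepsilon)$ with weight $\varepsilon = 8n/\lambda$ --- the unique choice reproducing both target coefficients --- reduces the lemma to the pointwise one-variable estimate
\begin{equation*}
	u\bigl(e^{u+u_0} - 1 - u - u_0\bigr) + \Bigl(1 - \tfrac{4n}{\lambda}\Bigr) u^2 \ge \Bigl(1 - \tfrac{4n}{\lambda}\Bigr) \tfrac{|u|^2}{1+|u|^2},
\end{equation*}
to be verified for each $u \in \mathbb{R}$ and $u_0 \le 0$. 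The assumption $\lambda > 4n$ enters here as the positivity of the factor $1 - 4n/\lambda$, which gives the Young step its slack.

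On the half-line $\{u \ge 0\}$ the Taylor-remainder summand is non-negative and $u^2 \ge |u|^2/(1+|u|^2)$, so the reduced inequality is immediate. On $\{u < 0\}$, however, the Taylor-remainder summand is non-positive, and the surplus $(1 - 4n/\lambda)|u|^4/(1+|u|^2)$ hidden in the quadratic term must absorb it. I expect this pointwise estimate on $\{u < 0\}$ to be the main obstacle, since $|u|(e^{u+u_0}-1-u-u_0)$ must be controlled uniformly in the non-positive parameter $u_0$. My first attempt would be a one-variable calculus argument: fix $u_0 \le 0$, view the difference of the two sides as a function of $u$ on $(-\infty,0]$, check that it vanishes to second order at $u=0$, is asymptotically non-negative as $u \to -\infty$, and rule out an interior minimum below zero. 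If this naive split does not close directly at the marginal values of $u$, a refined decomposition --- for instance, isolating $u\,e^{u_0}(e^u - 1)$ and handling the residue $u(e^{u_0} - 1 + g_0)$ by a further Young step tuned to the same weight --- should yield the required bound, again using $\lambda > 4n$ to keep the effective coefficient positive.
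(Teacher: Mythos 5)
Your treatment of the half-line $u \ge 0$ is correct and is essentially a self-contained derivation of the inequality the paper imports from Taubes (inequality (5.13) of \cite{Taubes:1979tm}) with $\beta = 1 - 4n/\lambda$, so that $1/[4(1-\beta)] = \lambda/(16n)$: there the remainder $u\,[e^{u+u_0}-1-u-u_0]$ is non-negative and Young's inequality with $\varepsilon = 8n/\lambda$ does the rest. The genuine gap is the region $u<0$, and it is not a removable technicality: the reduced pointwise inequality you propose there is false. Take $u_0$ near $0$ (far from the vortices), $4n/\lambda = 1/2$ and $u=-10$: the left side is $u(e^{u}-1-u)+\tfrac12 u^2 \approx -90+50=-40$, while the right side is about $0.5$; it also fails for fixed $u<0$ as $u_0\to-\infty$, which occurs at the vortex points. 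The structural reason is that for $u<0$ the Taylor remainder consumes the entire quadratic term: $u\,[e^{u+u_0}-1-u-u_0]+u^2 = u\,[e^{u+u_0}-1-u_0]$ grows only linearly in $|u|$ as $u\to-\infty$ (as does the exact left-hand side $u(e^{u+u_0}+g_0-1)$), so there is no quadratic surplus left to pay the Young step; in particular your expectation that the difference is asymptotically non-negative as $u\to-\infty$ is incorrect, and near the vortices the discarded positive part of $u(u_0+g_0)$ is exactly what would have compensated the $-u\,u_0$ inside the remainder. No one-variable argument in $u$ that treats $u_0\le 0$ as arbitrary and sees $g_0$ only through $(u_0+g_0)^2$ can close this case.

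What the negative region actually requires --- and what the paper's short proof imports from Taubes ((5.4) and (5.16)) --- are structural properties of the specific background \eqref{400}, namely $0\le g_0 \le 4n/\lambda$ and, for $\lambda \ge 4n$, $e^{u_0}+g_0 \le 1$. With these, for $u\le 0$,
\begin{equation*}
	u\bigl(e^{u+u_0}+g_0-1\bigr)
	= (-u)\bigl(1-g_0-e^{u_0}e^{u}\bigr)
	\ge (-u)(1-g_0)\bigl(1-e^{u}\bigr)
	= (1-g_0)\,u\bigl(e^{u}-1\bigr)
	\ge \Bigl(1-\frac{4n}{\lambda}\Bigr)\frac{u^{2}}{1+|u|}
	,
\end{equation*}
using the elementary estimate from Lemma \ref{lem:exp}; this is a penalty-free bound --- no $(u_0+g_0)^2$ term is needed at all on $\{u\le0\}$ --- and it is where $\lambda>4n$ genuinely enters on the negative side (through $g_0\le 4n/\lambda$ and $e^{u_0}\le 1-g_0$), not merely as slack in the Young step. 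Your fallback does not repair the argument either: a second Young step applied to $u(e^{u_0}-1+g_0)$ produces a penalty proportional to $(e^{u_0}-1+g_0)^2$, which is not dominated by $(u_0+g_0)^2$ (since $e^{u_0}-1\ge u_0$, one has $e^{u_0}-1+g_0 \ge u_0+g_0$, so the comparison goes the wrong way exactly when $u_0+g_0>0$), and the remaining piece $u\,e^{u_0}(e^{u}-1)$ cannot by itself yield the coefficient $1-4n/\lambda$ because $e^{u_0}$ has no positive lower bound near the vortex points. To complete the proof you must establish (or cite, as the paper does) the two background inequalities above and treat $u\ge0$ and $u\le0$ separately; as written, your argument proves only the $u\ge0$ half.
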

\begin{proof}
	If \( u(x) \ge 0 \), from (5.13) of \cite{Taubes:1979tm}, we have
	\begin{equation} \label{beta}
		u( e^{u + u_{0}} + g_{0} - 1 ) 
		\ge
		\beta \frac{|u|^2}{1 + |u|^2} - \frac{1}{4(1-\beta)} (u_{0} + g_{0})^{2}
	,
	\end{equation}
	where \(\beta\) is an arbitrary constant with \( \beta \in (0,1) \). 
	If \( u(x)\le 0 \), from (5.4) and (5.16) of \cite{Taubes:1979tm}, we have
	\begin{equation}
		u( e^{u + u_0} + g_0 - 1 ) 
		\ge
		\frac{\lambda - 4n}{\lambda}
		\frac{|u|^2}{1 + |u|^2}
	.
	\end{equation}
	Since \( \lambda > 4n\), we can choose \( \beta = 1 - 4n/\lambda \) in \eqref{beta} and hence for any \( u \in W^{1,2}(\mathbb{R}^2) \),
	\begin{equation}
		u( e^{u + u_0} + g_0 - 1 ) 
		\ge
		\frac{\lambda - 4n}{\lambda}
		\frac{|u|^2}{1 + |u|^2}
		- \frac\lambda{16n} (u_0 + g_0)^2
	,
	\end{equation}
	which completes the proof.
\end{proof}
\begin{proposition}
	There exist \(R, \delta > 0\) satisfying the inequality \eqref{305}.
\end{proposition}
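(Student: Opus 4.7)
The plan is to mirror the argument of Proposition \ref{prop37}, with Lemma \ref{lem:exp} replaced by Lemma \ref{lemma2} and the subsequent Sobolev-type estimate upgraded from an $L^{3}$ to an $L^{4}$ bound. Starting from the explicit form of the Gateaux derivative obtained in the preceding proposition, I would apply Lemma \ref{lemma2} pointwise together with H\"older's inequality on the $\varsigma u$ term, yielding
\begin{equation*}
	(\nabla_{u} S)[u]
	\ge
	\|\nabla u\|_{2}^{2}
	+ \left( 1 - \frac{4n}{\lambda} \right)
		\int d^{2} x \, \frac{|u|^{2}}{1+|u|^{2}}
	- K
	- \|\varsigma\|_{2} \|u\|_{2}
,
\end{equation*}
where $K = \frac{\lambda}{16 n} \|u_{0} + g_{0}\|_{2}^{2}$ is finite by the explicit polynomial decay of $u_{0}, g_{0}$ in \eqref{400}.

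Next, to turn the middle integral into a useful coercive quantity, I would apply Cauchy--Schwarz to the identity $\|u\|_{2}^{2} = \int \frac{|u|}{\sqrt{1+|u|^{2}}} \cdot |u|\sqrt{1+|u|^{2}}$ to obtain $\int \frac{|u|^{2}}{1+|u|^{2}} \ge \|u\|_{2}^{4}/(\|u\|_{2}^{2} + \|u\|_{4}^{4})$, in the spirit of Lemma \ref{lem:exp}, and then invoke the two-dimensional Ladyzhenskaya inequality $\|u\|_{4}^{4} \le 2 \|u\|_{2}^{2} \|\nabla u\|_{2}^{2}$ (provable by Fubini and Cauchy--Schwarz from the one-dimensional bound $u^{2}(x_{1},x_{2}) \le 2 \|u(\cdot,x_{2})\|_{L^{2}} \|\partial_{1} u(\cdot,x_{2})\|_{L^{2}}$). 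Together these give
\begin{equation*}
	\int d^{2} x \, \frac{|u|^{2}}{1+|u|^{2}}
	\ge
	\frac{\|u\|_{2}^{2}}{1 + 2 \|\nabla u\|_{2}^{2}}
.
\end{equation*}
The naive Hausdorff--Young route in parallel with Lemma \ref{lem:exp} only produces the weaker constant $4\pi$ in place of $2$, which is insufficient given the hypothesis $\|\varsigma\|_{2} < \sqrt{2/\pi}$; this is why Ladyzhenskaya is the essential new ingredient.

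Writing $a = \|\nabla u\|_{2}^{2}$, $b = \|u\|_{2}^{2}$, $r = \|u\|_{1,2}$ (so $a+b = r^{2}$) and $\mu = 1 - 4n/\lambda$, an elementary one-variable minimization in $a \ge 0$ gives the uniform bound $a + \mu b/(1+2a) \ge \sqrt{2\mu b} - \tfrac{1}{2}$: the interior critical point $a^{\ast} = (\sqrt{2\mu b} - 1)/2$ attains this value for $b \ge 1/(2\mu)$, and otherwise the bound $\mu b \ge \sqrt{2\mu b} - \tfrac{1}{2}$ is just $(\sqrt{2\mu b} - 1)^{2} \ge 0$. A two-case split on $b$ then completes the argument: for $b \le b_{1}$ the trivial bound $a + \mu b/(1+2a) \ge a = r^{2} - b$ yields $(\nabla_{u} S)[u] \ge R^{2} - b_{1} - K - \|\varsigma\|_{2} \sqrt{b_{1}}$, while for $b > b_{1}$ the minimized bound yields $(\nabla_{u} S)[u] \ge (\sqrt{2\mu} - \|\varsigma\|_{2})\sqrt{b_{1}} - \tfrac{1}{2} - K$.

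The main obstacle, and the only place where the free parameter $\lambda > 4n$ in \eqref{400} is used nontrivially, is ensuring that the linear coefficient $\sqrt{2\mu} - \|\varsigma\|_{2}$ is strictly positive. The hypothesis $\|\varsigma\|_{2} < \sqrt{2/\pi}$ gives $\|\varsigma\|_{2}^{2}/2 < 1/\pi < 1$, so picking $\lambda > 4n/(1 - \|\varsigma\|_{2}^{2}/2)$ makes $\mu > \|\varsigma\|_{2}^{2}/2$ and hence $\sqrt{2\mu} > \|\varsigma\|_{2}$. With such $\lambda$ fixed, one chooses $b_{1}$ large enough that the second-case bound exceeds $\delta$, and then $R$ large enough that the first-case bound exceeds $\delta$, which establishes \eqref{305} and completes the proof.
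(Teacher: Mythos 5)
Your proof is correct, and it takes a genuinely different route from the paper's at the key coercivity estimate. The paper recycles the machinery of Proposition \ref{prop37}: it feeds the output of Lemma \ref{lemma2} into Lemma \ref{lem:exp} (the Hausdorff--Young $L^{3}$ bound), parametrizes $\lVert u\rVert_{2}=t\lVert u\rVert_{1,2}$, and obtains a lower bound linear in $r=\lVert u\rVert_{1,2}$ with coefficient $\sqrt{2/\pi}\,(\lambda-4n)/\lambda-\lVert\varsigma\rVert_{2}$, made positive by taking $\lambda$ large. You instead estimate $\int |u|^{2}/(1+|u|^{2})$ --- which is exactly the quantity Lemma \ref{lemma2} provides, whereas the paper's own display silently passes to $|u|^{2}/(1+|u|)$ so as to reuse Lemma \ref{lem:exp} --- via Cauchy--Schwarz plus the two-dimensional Ladyzhenskaya inequality $\lVert u\rVert_{4}^{4}\le 2\lVert u\rVert_{2}^{2}\lVert\nabla u\rVert_{2}^{2}$, then minimize explicitly in $a=\lVert\nabla u\rVert_{2}^{2}$ (your identity $a+\mu b/(1+2a)\ge\sqrt{2\mu b}-\tfrac12$ checks out, including the boundary case) and close with a two-case split in $b=\lVert u\rVert_{2}^{2}$; positivity of $\sqrt{2\mu}-\lVert\varsigma\rVert_{2}$ follows from $\lambda>4n/(1-\lVert\varsigma\rVert_{2}^{2}/2)$. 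What your route buys: it uses Lemma \ref{lemma2} verbatim, and for this step it would tolerate any $\lVert\varsigma\rVert_{2}<\sqrt{2}$, strictly weaker than \eqref{402}; what it costs is the extra Ladyzhenskaya input and the case analysis, versus the paper's one-line reuse of Lemma \ref{lem:exp}. Two minor quibbles that do not affect correctness: your aside that the Hausdorff--Young route is ``insufficient'' is overstated --- the paper's proof does succeed with it, by working with the $L^{3}$ norm and the $1+|u|$ denominator --- and finiteness of $\lVert u_{0}+g_{0}\rVert_{2}$ requires, besides the $O(|\boldsymbol{x}|^{-2})$ decay at infinity, the local square-integrability of the logarithmic singularities of $u_{0}$ at the vortex points $\boldsymbol{x}_{a}$, which holds but deserves a word.
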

\begin{proof}
From Lemma \ref{lemma2}, we have
	\begin{equation}
	\begin{aligned}[b]
		( \nabla _{u} S ) [u]
		& =
		\int d^{2} x \,
		\Big[
			( \partial_{i} u )^{2}
			+ u( e^{u + u_{0}} + g_{0} - 1 )
			- u \varsigma
		\Big]
	\\	& \ge
		\int d^{2} x \,
		( \partial_{i} u )^{2}
		+
		\frac{ \lambda - 4n }{ \lambda }
		\int d^{2} x \, 
		\frac{ |u|^{2} }{ 1+|u| }
		-
                \frac\lambda{16n}
		\int d^{2} x \,
		( u_{0} + g_{0} )^{2}
		-
		\int d^{2} x \, 
		|u| |\varsigma|
	.
	\end{aligned}
	\end{equation}
	Denote \( \lVert u \rVert_{2} = t \lVert u \rVert_{1,2} = t\,r \).
    Applying the Lemma \ref{lem:exp} to the second integral and repeating the same procedure given in \eqref{3.20} of Proposition \ref{prop37}, we get
	\begin{equation}
	\begin{aligned}[b]
		( \nabla _{u} S ) [u]
		& \ge
		( 1 - t^{2} )r^{2}
		+
		\frac{ \lambda - 4n }{ \lambda }
		\frac{
			t^{4} r^{2}
		}{
			t^{2}
			+
			\sqrt{ \dfrac{\pi}{2} } r
		}
		-
                \frac\lambda{16n}
		\int d^{2} x \,
		( u_{0} + g_{0} )^{2}
		-
		t r
		\lVert \varsigma \rVert_{2}
	\\	& \ge
		\bigg[
			\sqrt{\frac{2}{\pi}}
			\frac
				{ \lambda - 4n }
				{ \lambda }
			-
			\lVert \varsigma \rVert_{2}
		\bigg] r
		- \bigg[
			\frac2\pi
			\frac
				{ \lambda - 4n }
				{ \lambda }
			+
			\frac\lambda{16n}
			\int d^{2} x \,
			( u_{0} + g_{0} )^{2}
		\bigg]
	,\label{404}
	\end{aligned}
	\end{equation}
	Given an inhomogeneity function \( \varsigma \) satisfying \eqref{402}, we can choose sufficiently large \( \lambda > 4n \) so that the coefficient of \(r\) becomes positive.
	Since the second term is a constant independent of \(r\), the inequality
	%
	\begin{equation}
		\inf_{ \lVert u \rVert_{1,2} = R }
		( \nabla _{u} S ) [u] \ge
		\bigg[
			\sqrt{\frac{2}{\pi}}
			\frac
				{ \lambda - 4n }
				{ \lambda }
			-
			\lVert \varsigma \rVert_{2}
		\bigg] R
		- \bigg[
			\frac2\pi
			\frac
				{ \lambda - 4n }
				{ \lambda }
			+
			\frac\lambda{16n}
			\int d^{2} x \,
			( u_{0} + g_{0} )^{2}
		\bigg]
		\ge \delta > 0
	\end{equation}
	holds for a sufficiently large \(R\).
\end{proof}

As in the vacuum case, we remark that the obtained weak solution is indeed continuous \(u \in C^{0}(\mathbb{R}^{2})\), while smoothness of inhomogeneity \(\sigma(\boldsymbol{x})\) of a given physical system needs to be specified for the analysis on the regularity of the weak solution.

\section{Conclusions and Discussion}

In this work, we have considered the Bogomolny equation \eqref{201} of the inhomogeneous abelian Higgs model \eqref{208} with critical quartic coupling.
Under the condition \eqref{302} on the inhomogeneity, which becomes
\(
	\int d^{2}x \,
	| \sigma (\boldsymbol{x}) |^{2}
	<
	v_{0}^{2} / \pi g^{2}
\)
in the original variables,
the existence and the uniqueness of the newly found nontrivial inhomogeneous BPS vacuum solution with vanishing energy \cite{ImAH} was proved by using variational approach.
We also proved the existence and the uniqueness of BPS multi-vortex solutions on \(\mathbb{R}^2\) in the presence of inhomogeneity.

Although we assumed the condition \eqref{302} in this paper, it should not be essential for the existence of the solutions.
In fact, in \cite{ImAH}, we considered Gaussian inhomogeneity,
\begin{equation}
	\varsigma(r)
	=
	- \beta
	e^{ - \alpha^{2} r^{2} },
\end{equation}
and numerically found the vacuum solution and rotationally symmetric vortex solutions for various values of parameters \(\alpha\) and \(\beta\). 
In particular, we were able to obtain numerical solutions even in the case that the condition \eqref{302} does not hold, as seen in Figure \ref{fig:501} where \( |\beta| \) should be less than \( \sqrt{2} \alpha/\pi\approx 0.1 \) for \eqref{302} to be satisfied.
%
%
\begin{figure}
    \centering
    \includegraphics[
        page=1,
        width=0.75\textwidth
    ]{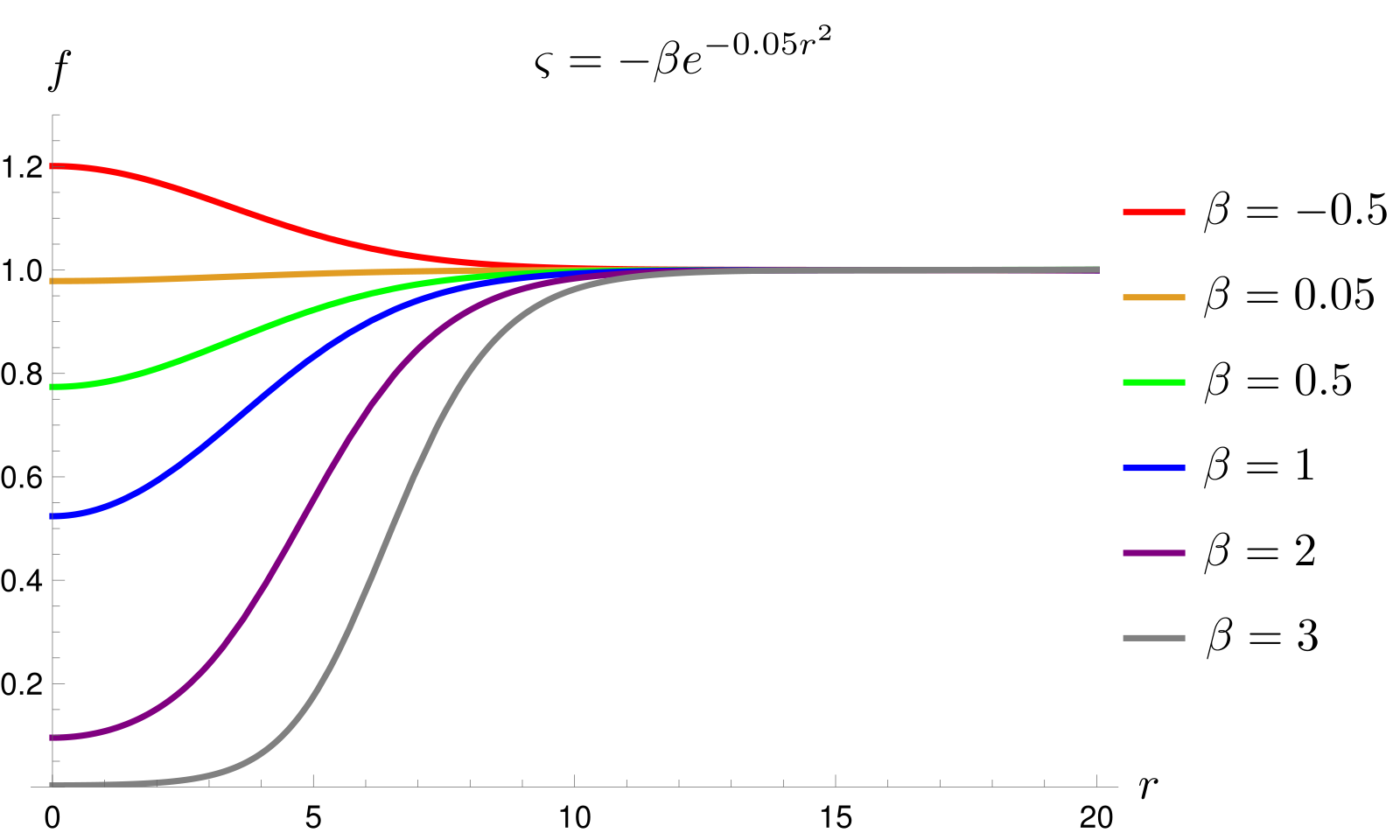}
    \caption{Inhomogeneous BPS vacuum solutions obtained by numerical works for various values of a parameter \(\beta\).}
	\label{fig:501}
\end{figure}
This strongly suggests the existence of solutions for larger inhomogeneities which violate \eqref{302}.
It would be interesting if one can prove the existence under a weaker condition such as the finite norm condition \( \lVert \sigma \rVert_{2} < \infty \).
Alternatively, there could exist some upper bound in \( \lVert \sigma \rVert_{2} \) beyond which solution no longer exists.
In the latter case, what would be the physical as well as mathematical significance of such value?
In order to answer these questions, one might have to employ other approaches such as iterative methods \cite{Wang:1992, Spruck:1995yy, Caffarelli:1995dz, Chae:2000tv}.


BPS multi-soliton solutions are found in other field theories in two spatial dimensions \cite{Manton:2004tk} such as the abelian Chern-Simons Higgs model \cite{Hong:1990yh, Jackiw:1990aw} which supports nontopological solitons and nontopological vortices in addition to topological vortices \cite{Jackiw:1990pr}.
Mathematically rigorous studies have established existence of BPS Chern-Simons soliton solutions \cite{Wang:1991na, Spruck:1995yy, Caffarelli:1995dz, Chae:2000tv, Kim:2000hx}.
The inhomogeneous version of the theory \cite{Han:2015tga} was also constructed and was shown to support the same type of solutions \cite{ImCSH}. 
Although the existence of topological vortices was proved in \cite{Han:2015tga}, such proofs are still lacking for inhomogeneous vacuum solution and nontopological solitons and vortices.
It would be worth studying the issue in the future.

\section*{Acknowledgement}

We thank to H. W. Song and O-K. Kwon for valuable discussions.
This work was supported by the National Research Foundation of Korea(NRF) grant with grant number NRF-2022R1F1A1074051 (C.K.), NRF-2022R1F1A1073053 (Y.K.).

\end{document}